\newtheorem{theorem}{Theorem}[section]
\newtheorem{lemma}[theorem]{Lemma}
\newtheorem{proposition}[theorem]{Proposition}
\theoremstyle{definition}
\newtheorem{definition}[theorem]{Definition}
\newtheorem{example}[theorem]{Example}
\DeclareMathOperator{\sinc}{sinc}
\DeclareMathOperator\supp{supp}
\DeclareMathOperator\Span{span}
\DeclareMathOperator\tr{tr}
\DeclareMathOperator\re{Re}
\numberwithin{equation}{section}
\def\H{{\mathbb{H}}}
\def\L{{L^2(\mathbb{H})}}
\def\N{{\mathbb{N}}}
\def\h{{\mathcal{H}}}
\def\R{{\mathbb{R}}}
\def\b{{\mathcal{B}_2}}
\def\z{{\mathbb{Z}}}
\def\f{{\mathcal{F}}}
\def\p{{\phi}}
\newcommand\numberthis{\addtocounter{equation}{1}\tag{\theequation}}
\newcommand{\be}{\begin{equation}}
\newcommand{\ee}{\end{equation}}
\begin{document}
\title{System of left translates and Oblique dual on the Heisenberg group}

\author{S. R. Das}
\address{School of Mathematical Sciences, NISER Bhubaneswar, Jatni, Odisha 752050, India}
\email{santiranjandas100@gmail.com}
\author{P. Massopust}
\address{Center of Mathematics, Technical University of Munich, Germany}
\email{massopust@ma.tum.de}

\author{R. Radha}
\address{Department of Mathematics, Indian Institute of Technology, Madras, India}
\email{radharam@iitm.ac.in}
\subjclass[2010]{Primary 42C15; Secondary 41A15, 43A30}



\keywords{$B$-splines; Heisenberg group; Gramian; {\it Hilbert-Schmidt} operator; {\it Riesz} sequence; Moment problem; Oblique dual; {\it Weyl} transform.}
\begin{abstract}
In this paper, we characterize the system of left translates $\{L_{(2k,l,m)}g:k,l,m\in\z\}$, $g\in L^2(\H)$, to be a frame sequence or a \emph{Riesz} sequence in terms of the twisted translates of the corresponding function $g^\lambda$. Here, $\H$ denotes the Heisenberg group and $g^\lambda$ the inverse Fourier transform of $g$ with respect to the central variable. This type of characterization for a \emph{Riesz} sequence allows us to find some concrete examples. We also study the structure of the oblique dual of the system of left translates $\{L_{(2k,l,m)}g:k,l,m\in\z\}$ on $\H$. This result is also illustrated with an example.
\end{abstract}
\maketitle
\section{Introduction}

A closed subspace $V\subset L^2(\R)$ is said to be a shift-invariant space if $f\in V\Rightarrow \mathcal{T}_kf\in V$ for any $k\in\z$, where $\mathcal{T}_xf(y)=f(y-x)$ denotes the translation operator. These spaces appear in the study of multiresolution analyses in order to construct wavelets. We refer to \cite{mallat, meyer} in this context. For $\phi\in L^2(\R)$, the shift-invariant space $V(\phi)=\overline{\Span\{\mathcal{T}_k\phi:k\in\z}\}$ is called a principal shift-invariant space. Shift-invariant spaces are broadly applied in various fields such as approximation theory, mathematical sampling theory, communication engineering, and so on. Apart from this, shift-invariant spaces have also been explored in various group settings.\\

In \cite{bownik}, Bownik obtained a characterization of shift-invariant spaces on $\R^n$ by using range functions. He derived equivalent conditions for a system of translates to be a frame sequence or a \emph{Riesz} sequence. Later these results were studied on locally compact abelian groups in \cite{bownikr, cabrelli, jakobsen,kamyabi} and on non-abelian compact groups in \cite{iverson, radhas}.\\

In recent years, problems in connection with frames, \emph{Riesz} bases, wavelets, and shift-invariant spaces on non-abelian groups, nilpotent \emph{Lie} groups, especially the \emph{Heisenberg} group, have drawn the attention of several researchers globally. (See, for example, \cite{aratiindag, aratijmpa, araticol, bhm, cmo, saswatahouston} in this context.)\\

In \cite{santifirst}, Das et al. obtained characterization results for a shift-invariant system to be a frame sequence or a \emph{Riesz} sequence in terms of the Gramian and the dual Gramian, respectively, on the Heisenberg group. Although the characterization results mentioned in this paper are interesting from the theoretical point of view, they are not useful in obtaining concrete \emph{Riesz} sequences of system of translates. In this paper, we attempt to overcome this difficulty and try to obtain a characterization for the system of left translates on the Heisenberg group to form a frame sequence or a \emph{Riesz} sequence. This is done with the help of deriving such characterizations for $\lambda$-twisted translates on $\R^2$. Apart from this, we also study the problem of obtaining oblique dual for a system of left translates on the Heisenberg group.\\

The structure of this paper is as follows. After introducing some background information about frames and the Heisenberg group in Section 2, we consider systems of left translates and their relation to frame and Riesz sequences on the \emph{Heisenberg} group in Section 3. Obliques duals of these systems of left translates are then investigated in Section 4.

\section{Background}
To proceed, we require the following definitions and results from frame theory and harmonic analysis on the Heisenberg group. In the former case, most of these can be found in, for instance, \cite{C}, and in the latter case in, i.e., \cite{follandphase,thangavelu}. 

$0\neq\h$ always denotes a separable Hilbert space.
\begin{definition}
A sequence $\{f_k:~k\in\N\}\subset \h$ is said to be a frame for $\h$ if there exist constants $A,B>0$ satisfying
\begin{align}\label{fc}
A\|f\|^2\leq\sum_{k\in\N}\mid\langle f,f_k\rangle\mid^2\hspace{1 mm}\leq B\|f\|^2,\hspace{.75 cm}\forall\ f\in\h.
\end{align}
\end{definition}
If $\{f_k:~k\in\N\}$ is a frame for $\overline{\Span\{f_k:~k\in\N\}}$, then it is called a frame sequence. 

A sequence $\{f_k:~k\in\N\}\subset \h$ satisfying only the upper bound in the frame condition \eqref{fc} is called a \emph{Bessel} sequence.

\begin{definition}
A sequence of the form $\{Ue_k:~k\in\N\}$, where $\{e_k:~k\in\N\}$ is an orthonormal basis of $\h$ and $U$ is a bounded invertible operator on $\h$, is called a \emph{Riesz} basis. If $\{f_k:~k\in\N\}$ is a \emph{Riesz} basis for $\overline{\Span\{f_k:~k\in\N\}}$, then it is called a \emph{Riesz} sequence. 
\end{definition}
Equivalently, $\{f_k:~k\in\N\}$ is said to be a \emph{Riesz} sequence if there exist constants $A,B>0$ such that
\begin{align*}
A\|\{c_{k}\}\|_{\ell^2(\N)}^2\leq\bigg\|\sum_{k\in\N}c_kf_k\bigg\|^2\leq B\|\{c_{k}\}\|_{\ell^2(\N)}^2,
\end{align*}
for all finite sequences $\{c_{k}\}\in \ell^2(\N)$.
\begin{theorem}\label{h}
Let $h\in L^2(\R)$. The system $\{T_kh:k\in\z\}$ is a \emph{Riesz} sequence with bounds $A,B>0$ iff
\begin{align*}
A\leq\sum_{k\in\z}|\widehat{h}(\lambda+k)|^2\leq B,\ \ \ \ for\ a.e.\ \lambda\in(0,1].
\end{align*}
\end{theorem}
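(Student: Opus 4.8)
The plan is to reduce the statement about translates in $L^2(\R)$ to an $\ell^2$-estimate on a single fundamental domain via the Fourier transform, using the fact that the Fourier transform is a unitary operator on $L^2(\R)$ and that it intertwines translation with modulation. First I would use the Riesz-sequence characterization recalled just above Theorem~\ref{h}: $\{T_k h: k\in\z\}$ is a \emph{Riesz} sequence with bounds $A,B$ precisely when
\[
A\,\|\{c_k\}\|_{\ell^2(\z)}^2\le\Bigl\|\sum_{k\in\z}c_k T_k h\Bigr\|_{L^2(\R)}^2\le B\,\|\{c_k\}\|_{\ell^2(\z)}^2
\]
for every finitely supported sequence $\{c_k\}$. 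Applying the Plancherel theorem and the identity $\widehat{T_k h}(\xi)=e^{-2\pi i k\xi}\widehat h(\xi)$, the middle term becomes $\bigl\|\,m(\xi)\,\widehat h(\xi)\,\bigr\|_{L^2(\R)}^2$, where $m(\xi)=\sum_k c_k e^{-2\pi i k\xi}$ is the $1$-periodic trigonometric polynomial with Fourier coefficients $\{c_k\}$.

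Next I would periodize: write $\R$ as the disjoint union of the translates $(0,1]+j$, $j\in\z$, and fold the integral onto $(0,1]$. Using the periodicity of $m$,
\[
\int_\R |m(\xi)|^2\,|\widehat h(\xi)|^2\,d\xi=\int_0^1 |m(\xi)|^2\,\Phi_h(\xi)\,d\xi,\qquad \Phi_h(\xi):=\sum_{j\in\z}|\widehat h(\xi+j)|^2,
\]
where $\Phi_h\in L^1(0,1]$ is the bracket (autocorrelation) function. Simultaneously, by Parseval's theorem on $(0,1]$, $\|\{c_k\}\|_{\ell^2(\z)}^2=\int_0^1|m(\xi)|^2\,d\xi$. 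So the Riesz inequalities are equivalent to
\[
A\int_0^1|m(\xi)|^2\,d\xi\le\int_0^1|m(\xi)|^2\,\Phi_h(\xi)\,d\xi\le B\int_0^1|m(\xi)|^2\,d\xi
\]
for all trigonometric polynomials $m$. The claim is now exactly that this family of integral inequalities holds iff $A\le\Phi_h(\xi)\le B$ for a.e.\ $\xi\in(0,1]$.

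The one nontrivial direction — and the main obstacle — is showing that the integral inequalities for all trigonometric polynomials force the pointwise bounds a.e.; the converse is immediate by monotonicity of the integral. For this I would argue by contradiction via a Lebesgue-point / localization argument: if, say, $\Phi_h<A$ on a set $E\subset(0,1]$ of positive measure, approximate $\chi_E$ in $L^1$ (or $L^2$) by trigonometric polynomials and use that $\{|m|^2: m \text{ trig.\ polynomial}\}$ is dense enough among nonnegative $L^1$ weights (equivalently, use the Fej\'er kernel to localize $|m|^2$ near a Lebesgue point of $\Phi_h$) to contradict the lower inequality; the upper bound $\Phi_h\le B$ a.e.\ is handled symmetrically. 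One must also record the finiteness $\Phi_h\in L^1(0,1]$ so that everything is well defined, and note that the boundedness part ($\Phi_h\le B$) is equivalent to $\{T_kh\}$ being \emph{Bessel}, which is standard. Assembling these pieces gives the stated equivalence.
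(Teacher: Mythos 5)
Your proposal is correct and is exactly the standard argument: the paper itself offers no proof of Theorem~\ref{h}, stating it as a known background fact (it is the classical characterization found in the cited reference \cite{C}), and your route --- Plancherel, periodization onto $(0,1]$ to produce the bracket function $\Phi_h(\xi)=\sum_j|\widehat h(\xi+j)|^2$, and Fej\'er--Lebesgue localization to pass from the weighted integral inequalities over all trigonometric polynomials to the pointwise bounds a.e. --- is precisely how that reference proves it. The one step you flag as nontrivial is handled correctly by your Fej\'er-kernel suggestion, since $\Phi_h\in L^1(0,1]$ and the Fej\'er kernels are of the form $|m|^2$ for trigonometric polynomials $m$ with $\int_0^1|m|^2=1$, so no further work is needed.
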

\begin{definition}
The {\it Gramian} $G$ associated with a {\it Bessel} sequence $\{f_k:~k\in\N\}$ is a bounded operator on $\ell^2(\N)$ defined by
\begin{align*}
G\{c_{k}\} :=\bigg\{\sum_{k\in\N}\big\langle f_k,f_j\big\rangle c_k\bigg\}_{j\in\N}.
\end{align*}
\end{definition}
It is well known that $\{f_k:~k\in\N\}$ is a {\it Riesz} sequence with bounds $A,B > 0$ iff
\begin{align*}
A\|\{c_{k}\}\|_{\ell^2(\N)}^2\leq\big\langle G\{c_{k}\},\{c_{k}\}\big\rangle\leq B\|\{c_{k}\}\|_{\ell^2(\N)}^2.
\end{align*}
\begin{definition}
Let $\{f_k:k\in\N\}$ be a \emph{Riesz} sequence in $\h$. If 
$$f=\sum\limits_{k\in\N}\langle f,g_k\rangle f_k,\ \ \ \ \forall\ f\in \overline{\Span\{f_k:k\in\z\}},$$
for some $\{g_k:k\in\N\} \subset \h$, then $\{g_k:k\in\N\}$ is called a generalized dual generator of $\{f_k:k\in\N\}$. In addition, if $\{g_k:k\in\N\}$ is a frame sequence, then $\{g_k:k\in\N\}$ is called an oblique dual generator of $\{f_k:k\in\N\}$.  
\end{definition}
\begin{definition}
Let $\{f_k:k\in J\}$ be a countable collection of elements in $\h$ and $\{\alpha_k\}_{k\in J}\in \ell^2(J)$. Consider the system of equations
\begin{equation}\label{4}
\langle f,f_k\rangle=\alpha_k,\ \ \ \forall\ k\in J.
\end{equation}
Finding such an $f\in\h$ from \eqref{4} is known as the moment problem.
\end{definition}
A moment problem may not have any solution at all or may have infinitely many solutions. But if $\{f_k:k\in J\}$ is a \emph{Riesz} sequence, then the moment problem has a unique solution $f\in\overline{\Span\{f_k:k\in J\}}$. For the existence of a solution of a moment problem one has the following result. 
\begin{lemma}[\cite{okl}]\label{c}
Let $\{f_k:k=1,2,\cdots,N\}$ be a finite collection of vectors in $\h$. Consider the moment problem
\begin{align*}
\langle f,f_k\rangle=\delta_{k,1}\ ,\ \ \ k=1,2,\cdots,N.
\end{align*}
Then the following statements are equivalent:
\begin{enumerate}
\item[\emph{(i)}] The moment problem has a solution $f\in\h$.
\item[\emph{(ii)}] $\sum\limits_{k=1}^N c_kf_k=0$, for some $\{c_k\}$ implies $c_1=0$.
\item[\emph{(iii)}] $f_1\notin \Span\{f_2, f_3,\cdots, f_N\}$.
\end{enumerate}
\end{lemma}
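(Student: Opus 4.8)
The plan is to establish the cycle of implications $(i)\Rightarrow(ii)\Rightarrow(iii)\Rightarrow(i)$, which yields the full equivalence; I will also note in passing that $(ii)$ and $(iii)$ are literally the same statement rephrased, so one of those two arrows is a tautology. The guiding observation is that the entire argument lives inside the finite-dimensional subspace $\Span\{f_1,\dots,f_N\}\subset\h$, so no completeness or density issues intervene; the only structural fact used is that a finite-dimensional subspace is closed and therefore admits an orthogonal complement.

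For $(i)\Rightarrow(ii)$, I would suppose $f\in\h$ satisfies $\langle f,f_k\rangle=\delta_{k,1}$ and that $\sum_{k=1}^N c_kf_k=0$; pairing the second relation against $f$ and using the (conjugate-)linearity of the inner product collapses the sum to $\overline{c_1}\langle f,f_1\rangle=\overline{c_1}$, which forces $c_1=0$. For $(ii)\Leftrightarrow(iii)$, I would simply record that a vanishing combination $\sum_{k=1}^N c_kf_k=0$ with $c_1\neq0$ is the same datum as an expression $f_1=-c_1^{-1}\sum_{k\ge2}c_kf_k\in\Span\{f_2,\dots,f_N\}$, and conversely a linear dependence of $f_1$ on $f_2,\dots,f_N$ produces such a combination.

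The one implication carrying any content is $(iii)\Rightarrow(i)$. Here I would set $W:=\Span\{f_2,\dots,f_N\}$, decompose $f_1=w+v$ with $w\in W$ and $v\in W^{\perp}$, and observe that $v\neq0$ precisely because $f_1\notin W$. Then the candidate $f:=v/\|v\|^{2}$ is orthogonal to every $f_k$ with $k\ge2$ (as $f_k\in W$), while $\langle f,f_1\rangle=\|v\|^{-2}\langle v,w+v\rangle=\|v\|^{-2}\|v\|^{2}=1$, so $f$ solves the moment problem. I do not anticipate a genuine obstacle here: the only points demanding care are placing the complex conjugate on the correct side according to the paper's inner-product convention in the step $(i)\Rightarrow(ii)$, and invoking the closedness of the finite-dimensional span so that the orthogonal decomposition used in $(iii)\Rightarrow(i)$ is available.
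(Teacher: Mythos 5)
Your proof is correct and complete; the paper itself gives no proof of this lemma, simply citing it from \cite{okl}, and your argument (pairing the vanishing combination against $f$ for (i)$\Rightarrow$(ii), the tautological equivalence of (ii) and (iii), and the orthogonal decomposition $f_1 = w + v$ with $f := v/\|v\|^2$ for (iii)$\Rightarrow$(i)) is exactly the standard one for this classical fact. The two points you flag — the conjugate landing on $c_1$ and the closedness of the finite-dimensional span $W$ guaranteeing the decomposition $\h = W \oplus W^{\perp}$ — are indeed the only places requiring care, and you handle both correctly.
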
  
\begin{definition}
A closed subspace $V\subset L^2(\R)$ is called a shift-invariant space if $f\in V\Rightarrow T_kf\in V$ for any $k\in\z$, where $T_x$ denotes the translation operator $T_xf(y)=f(y-x)$. In particular, if $\p\in L^2(\R)$, then $V(\p)=\overline{\Span\ \{T_k\p:k\in\z\}}$ is called a principal shift-invariant space.
\end{definition}
For a study of frames, {\it Riesz} basis on $\mathcal{H}$, and shift-invariant spaces on $L^2(\R)$, we refer to \cite{C}.\\
\begin{definition}
Let $\chi$ denote the characteristic function of $[0,1]$. For $n\in \N$, set
\begin{align*}
&B_{1}:[0,1]\to [0,1], \quad x\mapsto \chi(x);\\
&B_{n} := B_{n-1}\ast B_{1}, \quad n\geq 2,\quad n\in \N.\numberthis \label{1}
\end{align*}
Then $B_n$ is called a \emph{(cardinal) polynomial B-spline of order $n$}.
\end{definition}
For more and detailed information about B-splines and their applications, the interested reader may wish to consult any of the many references regarding B-splines.

The next stated result shows that cardinal B-splines form principal shift-invariant spaces.

\begin{theorem}\cite[Theorem 9.2.6]{C}\label{f}
For each $n\in \N$, the sequence $\{T_kB_n\}_{k\in \z}$ is a Riesz sequence.
\end{theorem}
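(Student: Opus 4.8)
The plan is to invoke Theorem~\ref{h} and reduce the claim to a pointwise bound on $\sum_{k\in\z}|\widehat{B_n}(\lambda+k)|^2$. First I would recall the well-known formula for the Fourier transform of the cardinal B-spline: since $B_1=\chi_{[0,1]}$ has $\widehat{B_1}(\xi)=e^{-\pi i\xi}\,\frac{\sin(\pi\xi)}{\pi\xi}$ and $B_n=B_{n-1}\ast B_1$, the convolution theorem gives $\widehat{B_n}(\xi)=\big(e^{-\pi i\xi}\,\frac{\sin(\pi\xi)}{\pi\xi}\big)^n$, so that $|\widehat{B_n}(\xi)|=\big|\tfrac{\sin(\pi\xi)}{\pi\xi}\big|^n=:|\sinc(\xi)|^n$. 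The upper bound in Theorem~\ref{h} is then immediate: $B_n\in L^2(\R)$ (indeed it is continuous with compact support for $n\ge 2$, and $B_1\in L^2$), so $\{T_kB_n\}_{k\in\z}$ is automatically a Bessel sequence, which forces $\sum_{k\in\z}|\widehat{B_n}(\lambda+k)|^2\le B$ for a.e.\ $\lambda$ with some finite $B$; alternatively one bounds the sum directly using $|\sinc(\lambda+k)|^2\le \frac{1}{\pi^2(\lambda+k)^2}$ for $k\neq 0$ and summing the resulting convergent series.

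The substance is the lower bound: one must show there exists $A>0$ with $\sum_{k\in\z}|\widehat{B_n}(\lambda+k)|^{2}\ge A$ for a.e.\ $\lambda\in(0,1]$. Equivalently, by continuity and periodicity of $\Phi_n(\lambda):=\sum_{k\in\z}|\sinc(\lambda+k)|^{2n}$, it suffices to prove $\Phi_n(\lambda)>0$ for every $\lambda\in\R$. I would argue by cases. If $\lambda\notin\z$, then already the single term with $k=0$ (choosing the integer translate so that $\lambda+k\in(0,1)$) is strictly positive since $\sin(\pi(\lambda+k))\neq 0$, hence $\Phi_n(\lambda)>0$. If $\lambda\in\z$, then $\sinc(\lambda+k)$ vanishes for all $k$ except the one making $\lambda+k=0$, where $\sinc(0)=1$; thus $\Phi_n(\lambda)=1>0$. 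In either case $\Phi_n(\lambda)\ge c$ for some uniform $c>0$ over the compact period, which is the desired $A$. Combining with the upper bound and Theorem~\ref{h} yields that $\{T_kB_n\}_{k\in\z}$ is a Riesz sequence.

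The only genuine subtlety is that Theorem~\ref{h} is stated with the hypothesis $h\in L^2(\R)$ and with the periodization evaluated "a.e.", so I would be careful to note that $\Phi_n$ is in fact continuous (the series converges locally uniformly because of the $O((\lambda+k)^{-2n})$ decay for $n\ge 1$), which upgrades the a.e.\ statement to an everywhere statement and makes the infimum a genuine minimum attained on $(0,1]$; this rules out a degenerate scenario where the lower bound could fail on a null set. For $n=1$ one can also just recall that $\{T_k\chi_{[0,1]}\}_{k\in\z}$ is an orthonormal sequence, giving the result with $A=B=1$ directly and serving as the base case if one prefers an induction on $n$ via the convolution structure. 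I expect the main obstacle to be purely expository rather than mathematical: organizing the case analysis for $\lambda\in\z$ versus $\lambda\notin\z$ cleanly and justifying the interchange of summation needed to identify $\sum_k|\widehat{B_n}(\lambda+k)|^2$ with $\Phi_n(\lambda)$, both of which are routine.
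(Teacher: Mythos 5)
Your proof is correct; the paper itself offers no argument here but simply cites \cite[Theorem 9.2.6]{C}, and your route --- computing $|\widehat{B_n}(\xi)|=|\sinc(\xi)|^n$, periodizing, and applying Theorem~\ref{h} --- is exactly the standard proof given in that reference. The only cosmetic difference is that the textbook obtains the explicit lower bound $A=(2/\pi)^{2n}$ from $\sin(\pi\lambda)\ge 2\lambda$ on $[0,\tfrac12]$ rather than your (equally valid) continuity-plus-compactness argument.
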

The {\it Heisenberg} group $\mathbb{H}$ is a nilpotent {\it Lie} group whose underlying manifold is $\R\times\R\times\R$ endowed with a group operation defined by 
\[
(x,y,t)(x^\prime,y^\prime,t^\prime):=(x+x^\prime,y+y^\prime,t+t^\prime+\tfrac{1}{2}(x^\prime y-y^\prime x)),
\]
and where {\it Haar} measure is {\it Lebesgue} measure $dx\,dy\,dt$ on $\R^3$. By the {\it Stone--von Neumann} theorem, every infinite dimensional irreducible unitary representation on $\mathbb{H}$ is unitarily equivalent to the representation $\pi_\lambda$ given by
\begin{align*}
\pi_\lambda(x,y,t)\p(\xi)=e^{2\pi i\lambda t}e^{2\pi i\lambda(x\xi+\frac{1}{2}xy)}\p(\xi+y),
\end{align*}
for $\p\in L^2(\R)$ and $\lambda\in \R^\times:=\R\setminus\{0\}$. This representation $\pi_\lambda$ is called the {\it Schr\"{o}dinger} representation of the {\it Heisenberg} group. For $f,g\in L^1(\mathbb{H})$, the group convolution of $f$ and $g$ is defined by
\begin{align}\label{hconv}
f*g(x,y,t):=\int_{\mathbb{H}}f\big((x,y,t)(u,v,s)^{-1}\big)g(u,v,s)\ du\,dv\,ds.
\end{align}
Under this group convolution, $L^1(\H)$ becomes a non-commutative \emph{Banach} algebra.\\
The group $Fourier$ transform of $f\in L^1(\H)$ is defined by
\begin{equation}\label{2}
\widehat{f}(\lambda)=\int_\H f(x,y,t)\hspace{1 mm}\pi_\lambda (x,y,t)\hspace{1 mm}dxdydt,\hspace{3 mm}\lambda\in\R^\times,
\end{equation}
where the integral is a $Bochner$ integral acting on the $Hilbert$ space $L^2(\R)$. The group {\it Fourier} transform is an isometric isomorphism between $\L$ and $L^2(\R^\times,\b;d\mu)$, where  $d\mu (\lambda)$ denotes \emph{Plancherel} measure $|\lambda |d\lambda$ and $\b$ is the {\it Hilbert} space of {\it Hilbert-Schmidt} operators on $L^2(\R)$ with inner product given by $(T,S) :=\tr(TS^*)$. Thus, we can write \eqref{2} as
\begin{equation*}
\widehat{f}(\lambda)=\int_{\R^2}f^\lambda(x,y)\pi_\lambda(x,y,0)\ dxdy\ ,
\end{equation*}
where
\begin{equation*}
f^\lambda(x,y) :=\int_{\R}f(x,y,t)e^{2\pi i\lambda t}\ dt.
\end{equation*} 
Note that the function $f^\lambda(x,y)$ is the inverse {\it Fourier} transform of $f$ with respect to the $t$ variable. For $g\in L^1(\R^2)$, let
\begin{equation*}
W_{\lambda}(g):=\int_{\R^2}g(x,y)\pi_{\lambda}(x,y,0)\ dxdy,\ \ \text{for $\lambda\in\R^\times$.}
\end{equation*}
Using this operator, we can rewrite $\widehat{f}(\lambda)$ as $W_{\lambda}(f^{\lambda})$. When $f,g\in\L$, one can show that $f^{\lambda}, g^{\lambda}\in L^2(\R^2)$ and $W_{\lambda}$ satisfies
\begin{equation}\label{3}
\big\langle f^\lambda,g^\lambda\big\rangle_{L^2(\R^2)}=|\lambda|\big\langle W_\lambda (f^\lambda),W_\lambda (g^\lambda)\big\rangle_{\b}.
\end{equation}
Now, define $\tau:\L\rightarrow L^2((0,1],\ell^2(\z,\b))$ by 
\[
\tau f(\lambda):=\{|\lambda-r|^{1/2}\widehat{f}(\lambda-r)\}_{r\in\z}, \quad\forall\ f\in\L,\; \lambda\in(0,1]. 
\]
Then, $\tau$ is an isometric isomorphism between $\L$ and $L^2((0,1],\ell^2(\z,\b))$. (See \cite{cmo, santifirst} in this context.) For $(u,v,s)\in\H$, the left translation operator $L_{(u,v,s)}$ is defined by 
\[
L_{(u,v,s)}f(x,y,t) :=f((u,v,s)^{-1}(x,y,t)),\hspace{1mm}\quad\forall\, (x,y,t)\in\H,
\]
which is a unitary operator on $\L$. Using the definitions of the left translation operator and the convolution, one can show that
\begin{equation}\label{28}
L_{(u,v,s)}(f*g)=(L_{(u,v,s)}f)*g.
\end{equation}
For $(u,v)\in\R^2$ and $\lambda\in\R^\times$, the $\lambda$-twisted translation operator $(T_{(u,v)}^t)^\lambda$ is defined by
\begin{align*}
(T_{(u,v)}^t)^\lambda F(x,y) :=e^{\pi i\lambda(vx-uy)}F(x-u,y-v),\quad \forall\ (x,y)\in\R^2,
\end{align*}
which is also a unitary operator on $L^2(\R^2)$. It is easy to see that
\begin{align}\label{23}
(L_{(u,v,s)}f)^\lambda=e^{2\pi is\lambda}(T_{(u,v)}^t)^\lambda f^\lambda.
\end{align}
For further properties of $\lambda$-twisted translation, we refer to \cite{saswatahouston}. 

Recall that for a locally compact group $G$, a lattice $\Gamma$ in $G$ is defined to be a discrete subgroup of $G$ which is co-compact. The standard lattice in $\H$ is taken to be $\Gamma:=\{(2k,l,m):k,l,m\in\z\}$. For a study of analysis on the \emph{Heisenberg} group we refer to \cite{follandphase, thangavelu}.
\section{System of left translates as a frame sequence and a \emph{Riesz} sequence}
Let $g\in\L$. In this section, we wish to obtain characterization results for the system $\{L_{(2k,l,m)}g:k,l,m\in\z\}$ to form a frame sequence or a \emph{Riesz} sequence in terms of the $\lambda$-twisted translations $g^\lambda$ of $g$.\\ 

From Corollary $3$ of \cite{santifirst}, we know that $\{L_{(2k,l,m)}g:k,l,m\in\z\}$ is a frame sequence with bounds $A,B>0$ iff
\begin{align}\label{22}
A\|\Phi(\lambda)\|^2\leq \sum_{k,l\in\z}|\langle \Phi(\lambda),\tau(L_{(2k,l,0)}g)(\lambda)\rangle|^2\leq B\|\Phi(\lambda)\|^2,\ \forall\ \Phi(\lambda)\in J(\lambda),\ \text{for a.e. $\lambda\in(0,1]$},
\end{align}
where $J(\lambda):=\overline{\Span\{\tau(L_{(2k,l,0)}g)(\lambda):k,l\in\z\}}$.

In order to prove that $\{L_{(2k,l,m)}g:k,l,m\in\z\}$ is a frame sequence, it suffices to consider the class $\Span\{\tau(L_{(2k,l,0)}g)(\lambda):k,l\in\z\}$ instead of $J(\lambda)$. Thus, the required condition for the verification of frame sequence reduces to the following two inequalities:

For any finite $\f\subset\z^2$ and any finite sequence $\{\alpha_{k,l}\}\in\ell^2(\z^2)$,
\begin{align*}
A\bigg\|\sum_{(k,l)\in\f}\alpha_{k,l}\tau(L_{(2k,l,0)}g)(\lambda)\bigg\|_{\ell^2(\z,\b)}^2 & \leq \sum_{k,l\in\z}\bigg|\Big\langle\sum_{(k^\prime,l^\prime)\in\f}\alpha_{k^\prime,l^\prime}\tau(L_{(2k^\prime,l^\prime,0)}g)(\lambda),\tau(L_{(2k,l,0)}g)(\lambda)\Big\rangle\bigg|^2\\
& \leq B\bigg\|\sum_{(k,l)\in\f}\alpha_{k,l}\tau(L_{(2k,l,0)}g)(\lambda)\bigg\|_{\ell^2(\z,\b)}^2,\ \text{a.e.}\ \lambda\in(0,1].
\end{align*}
Now, for $k,k^\prime,l,l^\prime\in\z$,
\begin{align*}
\langle\tau(L_{(2k^\prime,l^\prime,0)}g)(\lambda),\tau(L_{(2k,l,0)}g)(\lambda)\rangle_{\ell^2(\z,\b)}&=\sum_{r\in\z}|\lambda-r|\ \big\langle\widehat{L_{(2k^\prime,l^\prime,0)}g}(\lambda-r),\widehat{L_{(2k,l,0)}g}(\lambda-r)\big\rangle_\b\\
=\sum_{r\in\z}&|\lambda-r|\ \big\langle W_{\lambda-r}\big((L_{(2k^\prime,l^\prime,0)}g)^{\lambda-r}\big),W_{\lambda-r}\big((L_{(2k,l,0)}g)^{\lambda-r}\big)\big\rangle_\b\\
=\sum_{r\in\z}&\big\langle(L_{(2k^\prime,l^\prime,0)}g)^{\lambda-r},(L_{(2k,l,0)}g)^{\lambda-r}\big\rangle_{L^2(\R^2)}\\
=\sum_{r\in\z}&\big\langle(T_{(2k^\prime,l^\prime)}^t)^{\lambda-r}g^{\lambda-r},(T_{(2k,l)}^t)^{\lambda-r}g^{\lambda-r}\big\rangle_{L^2(\R^2)}\\
=\sum_{r\in\z}&e^{\pi i(\lambda-r)(kl^\prime-lk^\prime)}\big\langle\big(T_{(2(k^\prime-k),l^\prime-l)}^t\big)^{\lambda-r}g^{\lambda-r},g^{\lambda-r}\big\rangle_{L^2(\R^2)},\numberthis \label{26n}
\end{align*}
where we used \eqref{3} and \eqref{23}. 

In the following theorem, we state a condition for the system $\{L_{(2k,l,m)}g:k,l,m\in\z\}$ to be a frame sequence in terms of the $\lambda$-twisted translates of $g^\lambda$ on $\R^2$.
\begin{theorem}\label{a}
Let $g\in\L$. Then, the system $\{L_{(2k,l,m)}g:k,l,m\in\z\}$ is a frame sequence with bounds $A,B>0$ iff
\begin{align*}
A\sum_{r\in\z}\bigg\|\sum_{(k^\prime,l^\prime)\in\f}\alpha_{k^\prime,l^\prime}&(T_{(2k^\prime,l^\prime)}^t)^{\lambda-r}g^{\lambda-r}\bigg\|_{L^2(\R^2)}^2\leq \sum_{k,l\in\z}\bigg|\sum_{(k^\prime,l^\prime)\in\f,r\in\z}\alpha_{k^\prime,l^\prime}e^{\pi i(\lambda-r)(kl^\prime-lk^\prime)}\times\\
\Big\langle (T_{(2(k^\prime-k),l^\prime-l)}^t)^{\lambda-r}&g^{\lambda-r},g^{\lambda-r}\Big\rangle_{L^2(\R^2)}\bigg|^2\leq B\sum_{r\in\z}\bigg\|\sum_{(k^\prime,l^\prime)\in\f}\alpha_{k^\prime,l^\prime}(T_{(2k^\prime,l^\prime)}^t)^{\lambda-r}g^{\lambda-r}\bigg\|_{L^2(\R^2)}^2,\ \ \text{a.e.}\ \lambda\in(0,1],
\end{align*}
for any finite $\f\subset\z^2$ and any finite sequence $\{\alpha_{k,l}\}\in\ell^2(\z^2)$.
\end{theorem}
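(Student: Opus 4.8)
The plan is to show that the characterization \eqref{22} (equivalently, the two-sided inequality following it on $\Span\{\tau(L_{(2k,l,0)}g)(\lambda):k,l\in\z\}$) is literally a restatement of the claimed inequality, once every inner product and norm is rewritten on the $\R^2$ side via the identities \eqref{3} and \eqref{23}. So the proof is essentially a transcription argument: we already did the core computation in \eqref{26n}, and what remains is to do the analogous computation for the two norm terms and then substitute everything into \eqref{22}.

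Concretely, I would proceed as follows. First, recall from \cite{santifirst} (quoted above as \eqref{22}) that $\{L_{(2k,l,m)}g:k,l,m\in\z\}$ is a frame sequence with bounds $A,B$ iff the displayed two-sided inequality holds for all $\Phi(\lambda)\in J(\lambda)$ a.e.\ $\lambda$, and note, as the excerpt already observes, that it suffices to test on the dense subset $\Span\{\tau(L_{(2k,l,0)}g)(\lambda):k,l\in\z\}$, i.e.\ on $\Phi(\lambda)=\sum_{(k',l')\in\f}\alpha_{k',l'}\tau(L_{(2k',l',0)}g)(\lambda)$ for finite $\f\subset\z^2$ and finite scalar sequences $\{\alpha_{k,l}\}$. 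Second, expand the middle term:
\begin{align*}
\Big\langle \Phi(\lambda),\tau(L_{(2k,l,0)}g)(\lambda)\Big\rangle=\sum_{(k',l')\in\f}\alpha_{k',l'}\big\langle\tau(L_{(2k',l',0)}g)(\lambda),\tau(L_{(2k,l,0)}g)(\lambda)\big\rangle,
\end{align*}
and substitute \eqref{26n}, which gives exactly $\sum_{(k',l')\in\f,\,r\in\z}\alpha_{k',l'}e^{\pi i(\lambda-r)(kl'-lk')}\langle(T^t_{(2(k'-k),l'-l)})^{\lambda-r}g^{\lambda-r},g^{\lambda-r}\rangle_{L^2(\R^2)}$; taking the modulus squared and summing over $k,l\in\z$ produces the middle expression of Theorem~\ref{a}. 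Third, handle the two outer terms: by definition of $\tau$ and by \eqref{3},
\begin{align*}
\|\Phi(\lambda)\|_{\ell^2(\z,\b)}^2=\sum_{r\in\z}|\lambda-r|\,\Big\|W_{\lambda-r}\Big(\sum_{(k',l')\in\f}\alpha_{k',l'}(L_{(2k',l',0)}g)^{\lambda-r}\Big)\Big\|_{\b}^2=\sum_{r\in\z}\Big\|\sum_{(k',l')\in\f}\alpha_{k',l'}(T^t_{(2k',l')})^{\lambda-r}g^{\lambda-r}\Big\|_{L^2(\R^2)}^2,
\end{align*}
using linearity of $W_{\lambda-r}$ and \eqref{23} (the unimodular factors $e^{2\pi i\cdot 0\cdot(\lambda-r)}$ are trivial since $s=0$). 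Substituting these three computed quantities into \eqref{22} yields precisely the inequality in the statement, completing the equivalence.

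The only points requiring care — and what I would flag as the main (minor) obstacle — are the density/limiting argument and the a.e.-in-$\lambda$ bookkeeping. For the density step one must justify that verifying the frame inequality on the dense subspace $\Span\{\tau(L_{(2k,l,0)}g)(\lambda):k,l\in\z\}$ suffices to conclude it on its closure $J(\lambda)$; this is standard (both the Bessel-type upper bound and the lower bound pass to closures by continuity of all three expressions in $\Phi(\lambda)$), but it should be stated. One should also note that the exponent in \eqref{26n} uses the cocycle computation for $(T^t)^\lambda$, namely $(T^t_{(a,b)})^\lambda(T^t_{(c,d)})^\lambda=e^{\pi i\lambda(bc-ad)}(T^t_{(a+c,b+d)})^\lambda$ together with unitarity, which is where the phase $e^{\pi i(\lambda-r)(kl'-lk')}$ originates; since the excerpt already recorded \eqref{26n}, I would simply cite it. Finally, the passage between "for a.e.\ $\lambda$, for all finite $\f$ and $\{\alpha_{k,l}\}$" and "for all finite $\f$ and $\{\alpha_{k,l}\}$, for a.e.\ $\lambda$" is harmless here because there are only countably many finite subsets of $\z^2$ and one can restrict to rational scalars by continuity, so a countable union of null sets is still null — worth a one-line remark but not a genuine difficulty.
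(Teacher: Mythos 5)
Your proposal is correct and follows essentially the same route as the paper's proof: test \eqref{22} on $\Phi(\lambda)=\sum_{(k',l')\in\f}\alpha_{k',l'}\tau(L_{(2k',l',0)}g)(\lambda)$, rewrite the norm term via the definition of $\tau$ together with \eqref{3} and \eqref{23} to get \eqref{24}, rewrite the middle term via \eqref{26n} to get \eqref{27}, and substitute. Your additional remarks on the density step and the order of the quantifiers over $\lambda$ and $(\f,\{\alpha_{k,l}\})$ address points the paper leaves implicit, but do not change the argument.
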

\begin{proof}
The system $\{L_{(2k,l,m)}g:k,l,m\in\z\}$ is a frame sequence with bounds $A,B>0$ iff \eqref{22} holds. Consider $\Phi(\lambda) :=\sum\limits_{(k,l)\in\f}\alpha_{k,l}\tau(L_{(2k,l,0)}g)(\lambda)$, for some finite $\f\subset\z^2$ and a finite sequence $\{\alpha_{k,l}\}\in\ell^2(\z^2)$. Then,
\begin{align*}
\|\Phi(\lambda)\|_{\ell^2(\z,\b)}^2&=\bigg\|\sum_{(k,l)\in\f}\alpha_{k,l}\tau(L_{(2k,l,0)}g)(\lambda)\bigg\|_{\ell^2(\z,\b)}^2\\
&=\bigg\|\Big\{|\lambda-r|^{1/2}\sum_{(k,l)\in\f}\alpha_{k,l}\widehat{L_{(2k,l,0)}g}(\lambda-r)\Big\}_{r\in\z}\bigg\|_{\ell^2(\z,\b)}^2\\
&=\sum_{r\in\z}|\lambda-r|\ \bigg\|\sum_{(k,l)\in\f}\alpha_{k,l}\widehat{L_{(2k,l,0)}g}(\lambda-r)\bigg\|_\b^2\\
&=\sum_{r\in\z}|\lambda-r|\ \bigg\|\sum_{(k,l)\in\f}\alpha_{k,l}W_{\lambda-r}\big((L_{(2k,l,0)}g)^{\lambda-r}\big)\bigg\|_\b^2.
\end{align*}
Employing \eqref{3} and \eqref{23}, yields
\begin{align*}
\|\Phi(\lambda)\|_{\ell^2(\z,\b)}^2&=\sum_{r\in\z}\bigg\|\sum_{(k,l)\in\f}\alpha_{k,l}(L_{(2k,l,0)}g)^{\lambda-r}\bigg\|_{L^2(\R^2)}^2\\
&=\sum_{r\in\z}\bigg\|\sum_{(k,l)\in\f}\alpha_{k,l}(T_{(2k,l)}^t)^{\lambda-r}g^{\lambda-r}\bigg\|_{L^2(\R^2)}^2.\numberthis \label{24}
\end{align*}
On the other hand,
\begin{align}\label{25}
\langle\Phi(\lambda),\tau(L_{(2k,l,0)}g)(\lambda)\rangle=\sum_{(k^\prime,l^\prime)\in\f}\alpha_{k^\prime,l^\prime}\langle\tau(L_{(2k^\prime,l^\prime,0)}g)(\lambda),\tau(L_{(2k,l,0)}g)(\lambda)\rangle,
\end{align}
for $k,l\in\z$.
Using \eqref{26n} in \eqref{25}, we obtain
\begin{align}\label{27}
\langle\Phi(\lambda),\tau(L_{(2k,l,0)}g)(\lambda)\rangle=\sum_{(k^\prime,l^\prime)\in\f,r\in\z}\alpha_{k^\prime,l^\prime}e^{\pi i(\lambda-r)(kl^\prime-lk^\prime)}\big\langle\big(T_{(2(k^\prime-k),l^\prime-l)}^t\big)^{\lambda-r}g^{\lambda-r},g^{\lambda-r}\big\rangle_{L^2(\R^2)}.
\end{align}
Employing \eqref{24} and \eqref{27} in \eqref{22}, the required result follows.  
\end{proof} 
Next, we aim to characterize the system of left translates $\{L_{(2k,l,m)}g:k,l,m\in\z\}$ to be a \emph{Riesz} sequence, again in terms of $\lambda$-twisted translates of $g^\lambda$. To this end, we consider the Gramian associated with the system $\{\tau(L_{(2k,l,0)}g)(\lambda):k,l\in\z\}$ and obtain an equivalent condition for a {\it Riesz} sequence.\\

First, consider the Gramian associated with the system $\{\tau(L_{(2k,l,0)}g)(\lambda):k,l\in\z\}$. For $g\in\L$ and $\lambda\in(0,1]$, the Gramian of $\{\tau(L_{(2k,l,0)}g)(\lambda):k,l\in\z\}$ is defined by \[
G(\lambda):=H(\lambda)^*H(\lambda):\ell^2(\z^2)\rightarrow\ell^2(\z^2), 
\]
where $H(\lambda):\ell^2(\z^2)\rightarrow \ell^2(\z,\b)$ is given by 
\[
H(\lambda)\big(\{c_{k,l}\}\big):=\sum\limits_{k,l\in\z}c_{k,l}\tau(L_{(2k,l,0)}g)(\lambda). 
\] 
We obtain the following
\begin{theorem}\label{d}
The system $\{L_{(2k,l,m)}g:k,l,m\in\z\}$ is a \emph{Riesz} sequence iff there exists $A,B>0$ such that
\begin{align*}
A\|\{c_{k,l}\}\|_{\ell^2(\z^2)}^2 &\leq\sum_{k,l,k^\prime,l^\prime\in\z}\sum_{r\in\z}c_{k,l}\overline{c}_{k^\prime,l^\prime}e^{2\pi i(\lambda-r)(lk^\prime-kl^\prime)}\big\langle\big(T_{(2(k-k^\prime),l-l^\prime)}^t\big)^{\lambda-r}g^{\lambda-r},g^{\lambda-r}\big\rangle_{L^2(\R^2)}\\
&\leq B\|\{c_{k,l}\}\|_{\ell^2(\z^2)}^2,\numberthis \label{9}
\end{align*}
for $a.e.\ \lambda\in(0,1]$ and for all $\{c_{k,l}\}\in\ell^2(\z^2)$.
\end{theorem}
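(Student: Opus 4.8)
The plan is to mirror the structure of the proof of Theorem~\ref{a}, but now working with the Gramian characterization of a \emph{Riesz} sequence rather than the frame-sequence inequalities. Recall from the earlier discussion that $\{L_{(2k,l,m)}g:k,l,m\in\z\}$ is a \emph{Riesz} sequence with bounds $A,B>0$ iff the system $\{\tau(L_{(2k,l,0)}g)(\lambda):k,l\in\z\}$ is a \emph{Riesz} sequence in $\ell^2(\z,\b)$ with the same bounds for a.e.\ $\lambda\in(0,1]$; this reduction is of the same type as the one leading to \eqref{22} and is available from Corollary~3 of \cite{santifirst}. By the Gramian criterion recorded in Section~2, this is equivalent to
\[
A\|\{c_{k,l}\}\|_{\ell^2(\z^2)}^2\leq\big\langle G(\lambda)\{c_{k,l}\},\{c_{k,l}\}\big\rangle\leq B\|\{c_{k,l}\}\|_{\ell^2(\z^2)}^2,
\]
for a.e.\ $\lambda\in(0,1]$ and all $\{c_{k,l}\}\in\ell^2(\z^2)$, where $G(\lambda)=H(\lambda)^*H(\lambda)$ is the Gramian defined just above the statement.

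The computational heart of the proof is to expand $\langle G(\lambda)\{c_{k,l}\},\{c_{k,l}\}\rangle=\|H(\lambda)\{c_{k,l}\}\|_{\ell^2(\z,\b)}^2$ and identify it with the middle expression in \eqref{9}. First I would write
\[
\big\langle G(\lambda)\{c_{k,l}\},\{c_{k,l}\}\big\rangle=\sum_{k,l,k^\prime,l^\prime\in\z}c_{k,l}\,\overline{c}_{k^\prime,l^\prime}\,\big\langle\tau(L_{(2k,l,0)}g)(\lambda),\tau(L_{(2k^\prime,l^\prime,0)}g)(\lambda)\big\rangle_{\ell^2(\z,\b)},
\]
and then substitute the inner-product formula \eqref{26n} (with the roles of the primed and unprimed indices matched to the present ordering). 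This turns the Gram entry into $\sum_{r\in\z}e^{\pi i(\lambda-r)(k^\prime l-l^\prime k)}\langle(T_{(2(k-k^\prime),l-l^\prime)}^t)^{\lambda-r}g^{\lambda-r},g^{\lambda-r}\rangle_{L^2(\R^2)}$; one then needs to check that the phase exponent $\pi i(\lambda-r)(k^\prime l - l^\prime k)$ agrees with the $2\pi i(\lambda-r)(lk^\prime-kl^\prime)$ appearing in \eqref{9}. This is the step where bookkeeping matters: the factor-of-two discrepancy must be absorbed either by the fact that the first translation parameter in $L_{(2k,l,0)}$ is $2k$ (so the symplectic form $x^\prime y - y^\prime x$ evaluated on $(2k^\prime,l^\prime)$ and $(2k,l)$ produces $2(k^\prime l - k l^\prime)$), or by re-deriving \eqref{26n} directly for this labeling; I would re-examine \eqref{23} and the definition of $(T_{(u,v)}^t)^\lambda$ carefully here rather than trust a sign.

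Once the Gram entry is matched to the summand in \eqref{9}, the two inequalities in the Gramian criterion become verbatim the two inequalities in \eqref{9}, and invoking the equivalence "\,$\{f_k\}$ is a \emph{Riesz} sequence iff $A\|c\|^2\le\langle Gc,c\rangle\le B\|c\|^2$\," from Section~2, together with the a.e.-in-$\lambda$ reduction, closes the argument. A minor technical point to be careful about is that the Gramian criterion as stated in Section~2 is for sequences indexed by $\N$ and for all of $\ell^2$; here one should note that $\{\tau(L_{(2k,l,0)}g)(\lambda):k,l\in\z\}$ is automatically a \emph{Bessel} sequence for a.e.\ $\lambda$ when $g\in\L$ (so $G(\lambda)$ is a well-defined bounded operator and it suffices to test the inequalities on finitely supported $\{c_{k,l}\}$, then pass to all of $\ell^2(\z^2)$ by density), and that relabeling $\z^2$ by $\N$ is harmless. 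The main obstacle is therefore not conceptual but the correct propagation of the exponential phase factors and the factor $2$ through \eqref{26n}; everything else is a direct transcription of the Gramian characterization.
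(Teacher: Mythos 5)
Your proposal follows essentially the same route as the paper: the paper likewise quotes the fiberwise Gramian characterization (Theorem 6 of \cite{santifirst}), expands $\langle G(\lambda)\{c_{k,l}\},\{c_{k,l}\}\rangle=\|H(\lambda)(\{c_{k,l}\})\|^2$ into the double sum of Gram entries, and substitutes \eqref{26n} to obtain the middle term of \eqref{9}. Your caution about the exponential phase is well placed — a direct computation with the composition law for $\lambda$-twisted translates shows the exponent is $2\pi i(\lambda-r)(lk^\prime-kl^\prime)$ because the first translation parameter is $2k$ (so \eqref{26n} as printed, with $\pi i$, carries a typo that the paper's proof of Theorem \ref{d} silently corrects) — and your resolution of it is the right one.
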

\begin{proof}
By Theorem $6$ of \cite{santifirst}, the system $\{L_{(2k,l,m)}g:k,l,m\in\z\}$ is a \emph{Riesz} sequence iff there exist $A,B>0$ such that
\begin{align*}
A\|\{c_{k,l}\}\|_{\ell^2(\z^2)}^2\leq\langle G(\lambda)\{c_{k,l}\},\{c_{k,l}\}\rangle_{\ell^2(\z^2)}\leq B\|\{c_{k,l}\}\|_{\ell^2(\z^2)}^2, \numberthis \label{1201n}
\end{align*}
for a.e. $\lambda\in(0,1]$ and for all $\{c_{k,l}\}\in\ell^2(\z^2)$. But
\begin{align*}
\langle G(\lambda)\{c_{k,l}\},\{c_{k,l}\}\rangle_{\ell^2(\z^2)}&=\big\|H(\lambda)\big(\{c_{k,l}\}\big)\big\|_{\ell^2(\z,\b)}^2\\
&=\bigg\|\sum_{k,l\in\z}c_{k,l}\tau(L_{(2k,l,0)}g)(\lambda)\bigg\|_{\ell^2(\z,\b)}^2\\
&=\sum_{k,l,k^\prime,l^\prime\in\z}c_{k,l}\bar{c}_{k^\prime,l^\prime}\big\langle\tau(L_{(2k,l,0)}g)(\lambda),\tau(L_{(2k^\prime,l^\prime,0)}g)(\lambda)\big\rangle_{\ell^2(\z,\b)},\\
&=\sum_{k,l,k^\prime,l^\prime\in\z}\sum_{r\in\z}c_{k,l}\overline{c}_{k^\prime,l^\prime}e^{2\pi i(\lambda-r)(lk^\prime-kl^\prime)}\big\langle\big(T_{(2(k-k^\prime),l-l^\prime)}^t\big)^{\lambda-r}g^{\lambda-r},g^{\lambda-r}\big\rangle_{L^2(\R^2)},\numberthis \label{1200n}
\end{align*}
by using \eqref{26n}. Employing \eqref{1200n} in \eqref{1201n}, we obtain \eqref{9}.
\end{proof}
\begin{example}
Let $\p(x,y,t):=\chi_{[0,2]}(x)\chi_{[0,2]}(y)h(t)$, where $\chi_{[0,2]}$ denotes the characteristic function on $[0,2]$ and $h\in L^2(\R)$ is given by $\widehat{h}(\lambda)=\chi_{[0,p]}(\lambda)$, for $\N\ni p\geq 3$. Then, $\left\|\p\right\|_{\L}^2=4\left\|h\right\|_{L^2(\R)}^2$. Furthermore, $\p^\lambda(x,y)=\chi_{[0,2]}(x)\chi_{[0,2]}(y)\widehat{h}(-\lambda)$. Now,
\begin{align*}
\left\langle\left(T_{(2k,l)}^t\right)^\lambda\p^\lambda,\p^\lambda\right\rangle&=\int_{\R^2}e^{\pi i\lambda(lx-2ky)}\p^\lambda(x-2k,y-l)\overline{\p^\lambda(x,y)}\,dx\,dy\\
&=\overline{\widehat{h}(-\lambda)}\int_0^2\int_0^2e^{\pi i\lambda(lx-2ky)}\p^\lambda(x-2k,y-l)\,dy\,dx\\
&=\overline{\widehat{h}(-\lambda)}\int_{-2k}^{2-2k}\int_{-l}^{2-l}e^{\pi i\lambda(lx-2ky)}\p^\lambda(x,y)\,dy\,dx\\
&=|\widehat{h}(-\lambda)|^2\int_{[-2k,2-2k]\cap[0,2]}\int_{[-l,2-l]\cap[0,2]}e^{\pi i\lambda(lx-2ky)}\,dy\,dx.\numberthis \label{37}
\end{align*}
For $\lambda\in(0,1]$ and $\{c_{k,l}\}\in\ell^2(\z^2)$, consider the middle term in \eqref{9} which is $\langle G(\lambda)\{c_{k,l}\},\{c_{k,l}\}\rangle_{\ell^2(\z^2)}$.
It follows from \eqref{37} that only $k^\prime=k$ and $l^\prime=l-1,\,l,\,l+1$ will contribute to the sum over $k^\prime,l^\prime\in\z$. Thus, we have
\begin{align*}
\langle G(\lambda)\{c_{k,l}\},\{c_{k,l}\}\rangle_{\ell^2(\z^2)}=M_1+M_2+M_3,
\end{align*}
where
\begin{align*}
M_1:=\sum_{r\in\z}\sum_{k,l\in\z}c_{k,l}\overline{c_{k,l-1}}e^{2\pi i(\lambda-r)k}\left\langle\left(T_{(0,1)}^t\right)^{\lambda-r}\p^{\lambda-r},\p^{\lambda-r}\right\rangle,\numberthis \label{39}
\end{align*}
\begin{align*}
M_2:=\sum_{r\in\z}\sum_{k,l\in\z}c_{k,l}\overline{c_{k,l+1}}e^{-2\pi i(\lambda-r)k}\left\langle\left(T_{(0,-1)}^t\right)^{\lambda-r}\p^{\lambda-r},\p^{\lambda-r}\right\rangle,
\end{align*}
and
\begin{align*}
M_3:=\sum_{k,l\in\z}|c_{k,l}|^2\sum_{r\in\z}\left\|\p^{\lambda-r}\right\|_{L^2(\R^2)}^2.
\end{align*}
We observe that $M_2=\overline{M_1}$. Hence, $\langle G(\lambda)\{c_{k,l}\},\{c_{k,l}\}\rangle_{\ell^2(\z^2)}=2\re{(M_1)}+M_3$. But $\re(M_1)\leq|M_1|$. Applying the \emph{Cauchy-Schwarz} inequality in \eqref{39}, we obtain $\re(M_1)\leq\left\|\{c_{k,l}\}\right\|_{\ell^2(\z^2)}^2I_{1,\lambda}$, where
\begin{align*}
I_{1,\lambda}&:=\left|\sum_{r\in\z}\left\langle\left(T_{(0,1)}^t\right)^{\lambda-r}\p^{\lambda-r},\p^{\lambda-r}\right\rangle\right|\\
&=\left|\sum_{r\in\z}|\widehat{h}(-(\lambda-r))|^2\int_0^2e^{\pi i(\lambda-r)x}\,dx\right|\\
&=\left|\sum_{r=1}^p\int_0^2e^{\pi i(\lambda-r)x}\,dx\right|.
\end{align*}
But,
\begin{align*}
\int_0^2e^{\pi i(\lambda-r)x}\,dx=2e^{\pi i(\lambda-r)}\sinc(\lambda-r).
\end{align*}
Hence,
\begin{align*}
I_{1,\lambda}\leq 2\sum_{r=1}^p|\sinc(\lambda-r)|\leq 2\sum_{r=1}^p 1=2p.
\end{align*}
As $\left\|\p^{\lambda-r}\right\|_{L^2(\R^2)}^2=|\widehat{h}(-(\lambda-r))|^2$,
\begin{align*}
M_3=2\left[\sum_{r\in\z}|\widehat{h}(-(\lambda-r))|^2\right]\left\|\{c_{k,l}\}\right\|_{\ell^2(\z^2)}^2=2p\left\|\{c_{k,l}\}\right\|_{\ell^2(\z^2)}^2.
\end{align*}
Therefore, 
\begin{align*}
\langle G(\lambda)\{c_{k,l}\},\{c_{k,l}\}\rangle_{\ell^2(\z^2)}&\leq 6p\left\|\{c_{k,l}\}\right\|_{\ell^2(\z^2)}^2.
\end{align*}
On the other hand, $\re(M_1)\geq -|M_1|$ leads to
\begin{align*}
&\langle G(\lambda)\{c_{k,l}\},\{c_{k,l}\}\rangle_{\ell^2(\z^2)}\geq 2\left[p-2\left|\sum_{r=1}^pe^{\pi i(\lambda-r)}\sinc(\lambda-r)\right|\right]\left\|\{c_{k,l}\}\right\|_{\ell^2(\z^2)}^2.
\end{align*}
Now,
\begin{align*}
& p-2\left|\sum_{r=1}^pe^{\pi i(\lambda-r)}\sinc(\lambda-r)\right|\\
&=p-2\left|\left(\sum_{r=1}^p\cos\left(\pi(\lambda-r)\right)\sinc(\lambda-r)\right)+i\left(\sum_{r=1}^p\sin\left(\pi(\lambda-r)\right)\sinc(\lambda-r)\right)\right|\\
&=:p-A_p(\lambda)\numberthis \label{40}.
\end{align*}
Employing some properties of the digamma function \cite[Section 6.3]{AS}
\[
\psi ^{(0)}(z) := \frac{d}{dz}\log\Gamma(z), \quad \re z > 0,
\]
we deduce that
\begin{align*}
\sum_{r=1}^p\cos\left(\pi(\lambda-r)\right)&\sinc(\lambda-r) = -\sum_{r=1}^p \frac{\cos(\pi\lambda)\sin(\pi\lambda)}{\pi(r-\lambda)}\\
& = -\left(\frac{\cos(\pi\lambda)\sin(\pi\lambda)}{\pi(1-\lambda)} + \frac{\sin (\pi  \lambda ) \cos (\pi  \lambda ) (\psi ^{(0)}(p-\lambda +1)-\psi
   ^{(0)}(2-\lambda ))}{\pi }\right)
\end{align*}
and
\begin{align*}
\sum_{r=1}^p\sin\left(\pi(\lambda-r)\right)&\sinc(\lambda-r) = \sum_{r=1}^p \frac{\sin^2(\pi\lambda)}{\pi(r-\lambda)}\\
& = \frac{\sin^2(\pi\lambda)}{\pi(1-\lambda)}+\frac{\sin ^2(\pi  \lambda ) (\psi ^{(0)}(p-\lambda +1)-\psi ^{(0)}(2-\lambda ))}{\pi }.
\end{align*}
Hence,
\begin{align*}
A_p(\lambda) &= 2\frac{\sin(\pi  \lambda )}{\pi (1-\lambda)} \big(1-(1-\lambda) \psi ^{(0)}(2-\lambda )+(1-\lambda) \psi^{(0)}(p-\lambda +1)\big)\\
& = 2 \sinc (1-\lambda) \big[1+(1-\lambda) \big(\psi^{(0)}(p-\lambda +1) - \psi ^{(0)}(2-\lambda) \big) \big],
\end{align*}
where we used that $\sin(\pi  \lambda ) = \sin\pi(1-  \lambda )$.

The goal is to find those values of $p$ for which $p - 2A_p(\lambda) > 0$, for all $\lambda\in (0,1]$. As the digamma function is monotone increasing and positive for integer arguments $\geq 2$ and as
\be\label{7.10}
\lim_{\lambda\to 0+} A_p(\lambda) = 0\quad\text{and}\quad A_p(1) = 2,
\ee
we show that $p - A_p(\lambda)$ has a unique positive minimum at $\lambda_0\in (0,1)$ whose value is strictly positive for $p\geq 3$ and that $p+1 - A_{p+1}(\lambda) > p - A_p(\lambda)$, for all $\lambda\in (0,1)$ and $p\geq 3$. To establish the latter, note that
\begin{align*}
p+1 - A_{p+1}(\lambda) & = p+1 - (2 \sinc (1-\lambda)) \big[1+(1-\lambda) \big(\psi^{(0)}(p+1-\lambda +1) - \psi ^{(0)}(2-\lambda \big) \big]\\
& = p+1 - (2 \sinc (1-\lambda))\big[1+(1-\lambda) \big(\psi^{(0)}(p+1-\lambda) + \frac{1}{p+1-\lambda} - \psi ^{(0)}(2-\lambda \big) \big]\\
&= p -A_p(\lambda) + 1 - \frac{2(1-\lambda)}{p+1-\lambda} \sinc (1-\lambda)\\
& > p -A_p(\lambda),\quad\text{for $p\geq 3$}.
\end{align*}
Hence, it suffices to show that $3 - A_3(\lambda)$ has a unique minimum value for a $\lambda\in [0,1]$. To this end, we remark that
\begin{align*}
3 - A_3(\lambda) &= 3 - (2 \sinc (1-\lambda)) \big[1+(1-\lambda) \big(\psi^{(0)}(4-\lambda +1) - \psi ^{(0)}(2-\lambda \big) \big]\\
& = 3 - (2 \sinc (1-\lambda)) \left[3 - \frac{2}{3-\lambda} - \frac{1}{2 - \lambda}\right] =: \Psi(\lambda).
\end{align*}
Differentiation of $\Psi$ with respect to $\lambda$ yields
\[
\Psi'(\lambda) = \frac{2 \pi  (\lambda -3) (\lambda -2) (\lambda -1) (3 (\lambda -4) \lambda +11) \cos
   (\pi  \lambda )-2 (3 (\lambda -4) \lambda  ((\lambda -4) \lambda +8)+49) \sin (\pi 
   \lambda )}{\pi  (\lambda -3)^2 (\lambda -2)^2 (\lambda -1)^2}.
\]
Numerically solving $\Psi'(\lambda) = 0$, $0 < \lambda < 1$, produces a unique zero at $\lambda_0 \approx 0.762714$. As $\Psi''(\lambda_0) \approx 12.8421$ and because of Eqns.~\ref{7.10}, the point $(\lambda_0, 3 - A_3(\lambda_0)) \approx (0.762714, 0.638135)$ is the unique global minimum of $3 - A_3(\lambda)$ on $[0,1]$.

Therefore, the right-hand side of \eqref{40} is strictly positive. Hence, by Theorem \ref{d}, we conclude that the shift-invariant system $\{L_{(2k,l,m)}\p:k,l,m\in\z\}$ forms a \emph{Riesz} sequence for each $p\geq 3$. 
\end{example}
The following result shows that one can obtain more examples of \emph{Riesz} sequences of left translates on $\H$ from the \emph{Riesz} sequence of classical translates on $\R$.
\begin{proposition}\label{i}
Let $h\in L^2(\R)$. Define $\p(x,y,t) :=\chi_{[0,2]}(x)\chi_{[0,1]}(y)h(t)$. Then, the system $\{L_{(2k,l,m)}\p:k,l,m\in\z\}$ is a Riesz sequence in $L^2(\H)$ with bounds $A, B > 0$ iff the system $\{T_rh:r\in\z\}$ is a Riesz sequence in $L^2(\R)$ with bounds $\frac{1}{2}A$ and $\frac{1}{2}B$. 
\end{proposition}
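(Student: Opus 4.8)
The plan is to run the argument entirely through Theorem \ref{d}: we compute the Gramian $G(\lambda)$ associated with $\{\tau(L_{(2k,l,0)}\p)(\lambda):k,l\in\z\}$, show it is a scalar multiple of the identity, identify that scalar with $2\sum_{r\in\z}|\widehat h(\cdot+r)|^2$, and then invoke Theorem \ref{h} to recognize the resulting inequality as the Riesz-sequence condition for $\{T_r h:r\in\z\}$, with the bounds halved.

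First I would record, exactly as in the Example, that $\p^\lambda(x,y)=\chi_{[0,2]}(x)\chi_{[0,1]}(y)\widehat h(-\lambda)$, and then look at the cross terms appearing in the middle expression of \eqref{9} for $g=\p$. Such a term involves $\big(T^t_{(2(k-k'),\,l-l')}\big)^{\lambda-r}\p^{\lambda-r}$ paired against $\p^{\lambda-r}$ in $L^2(\R^2)$. Since the $x$-profile $\chi_{[0,2]}$ is translated by $2(k-k')$ and the $y$-profile $\chi_{[0,1]}$ by $l-l'$, the supports overlap in a set of positive measure only when $k=k'$ and $l=l'$; on that diagonal the twisted-translation operator is the identity and the phase factor $e^{2\pi i(\lambda-r)(lk'-kl')}$ is $1$. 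Consequently
\[
\langle G(\lambda)\{c_{k,l}\},\{c_{k,l}\}\rangle_{\ell^2(\z^2)}
=\Big(\sum_{r\in\z}\big\|\p^{\lambda-r}\big\|_{L^2(\R^2)}^2\Big)\,\|\{c_{k,l}\}\|_{\ell^2(\z^2)}^2 .
\]

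Next I would evaluate $\big\|\p^{\lambda-r}\big\|_{L^2(\R^2)}^2=\big(\int_0^2 dx\big)\big(\int_0^1 dy\big)\,|\widehat h(-(\lambda-r))|^2=2\,|\widehat h(r-\lambda)|^2$, so that the scalar above equals $2\sum_{r\in\z}|\widehat h(r-\lambda)|^2$. Writing $r-\lambda=(r-1)+(1-\lambda)$ and reindexing shows that, as $\lambda$ runs through $(0,1]$, this is $2\sum_{r\in\z}|\widehat h(\mu+r)|^2$ with $\mu$ running through (a.e. point of) $(0,1]$. By Theorem \ref{d}, $\{L_{(2k,l,m)}\p:k,l,m\in\z\}$ is therefore a Riesz sequence with bounds $A,B>0$ iff
\[
A\leq 2\sum_{r\in\z}|\widehat h(\mu+r)|^2\leq B\quad\text{for a.e. }\mu\in(0,1],
\]
i.e.\ iff $\tfrac12 A\leq\sum_{r\in\z}|\widehat h(\mu+r)|^2\leq\tfrac12 B$ a.e.; by Theorem \ref{h} this is precisely the statement that $\{T_r h:r\in\z\}$ is a Riesz sequence in $L^2(\R)$ with bounds $\tfrac12 A$ and $\tfrac12 B$.

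The proof is essentially bookkeeping, and the only step that needs genuine care — the ``main obstacle'' here — is the vanishing of the off-diagonal Gramian entries, which relies on the fact that the generator profile $\chi_{[0,2]}\otimes\chi_{[0,1]}$ tiles under the lattice translations $(2\z)\times\z$ built into the system $\{L_{(2k,l,m)}\p\}$; once that is verified, the scalar evaluation, the reindexing, and the appeal to Theorem \ref{h} are all immediate. (It is worth noting explicitly that choosing $\chi_{[0,1]}$ rather than $\chi_{[0,2]}$ in the $y$-variable is exactly what collapses the three surviving diagonals of the Example to the single diagonal $k=k'$, $l=l'$ used here.)
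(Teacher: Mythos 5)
Your proposal is correct and follows essentially the same route as the paper: compute the Gramian entries via the support/overlap argument to see that only the diagonal $k=k'$, $l=l'$ survives, identify the diagonal value as $2|\widehat h(-(\lambda-r))|^2$, and then combine Theorem \ref{d} with Theorem \ref{h}. The only difference is cosmetic — you spell out the reindexing $r-\lambda=(r-1)+(1-\lambda)$ that the paper leaves implicit when invoking Theorem \ref{h}.
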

\begin{proof}
We have $\p^\lambda(x,y)=\chi_{[0,2]}(x)\chi_{[0,1]}(y)\widehat{h}(-\lambda)$. Now,
\begin{align*}
\left\langle\left(T_{(2k,l)}^t\right)^\lambda\p^\lambda,\p^\lambda\right\rangle&=\int_{\R^2}e^{\pi i\lambda(lx-2ky)}\p^\lambda(x-2k,y-l)\overline{\p^\lambda(x,y)}\,dx\,dy\\
&=\overline{\widehat{h}(-\lambda)}\int_0^2\int_0^1e^{\pi i\lambda(lx-2ky)}\p^\lambda(x-2k,y-l)\,dy\,dx\\
&=\overline{\widehat{h}(-\lambda)}\int_{-2k}^{2-2k}\int_{-l}^{1-l}e^{\pi i\lambda(lx-2ky)}\p^\lambda(x,y)\,dy\,dx\\
&=|\widehat{h}(-\lambda)|^2\int_{[-2k,2-2k]\cap[0,2]}\int_{[-l,1-l]\cap[0,1]}e^{\pi i\lambda(lx-2ky)}\,dy\,dx,
\end{align*}
which in turn implies that $\langle(T_{(2k,l)}^t)^\lambda\p^\lambda,\p^\lambda\rangle=0$, $\forall\ (k,l)\in \z^{2}\setminus\{(0,0)\}$. Moreover, for $(k,l)=(0,0)$, $\langle(T_{(2k,l)}^t)^\lambda\p^\lambda,\p^\lambda\rangle=2|\widehat{h}(-\lambda)|^2$. For $\{c_{k,l}\}\in\ell^2(\z^{2})$, the middle term in \eqref{9} becomes
\begin{align*}
\langle G(\lambda)\{c_{k,l}\},\{c_{k,l}\}\rangle_{\ell^2(\z^2)}&=\sum_{k,l\in\z}|c_{k,l}|^2\sum_{r\in\z}2|\widehat{h}(-(\lambda-r))|^2\\
&=2\|\{c_{k,l}\}\|_{\ell^2(\z^2)}^2\sum_{r\in\z}|\widehat{h}(-(\lambda-r))|^2.
\end{align*}
From Theorem \ref{d}, the system $\{L_{(2k,l,m)}\p:k,l,m\in\z\}$ is a \emph{Riesz} sequence with bounds $A,B > 0$ iff
\begin{align*}
A\|\{c_{k,l}\}\|_{\ell^2(\z^2)}^2\leq 2\|\{c_{k,l}\}\|_{\ell^2(\z^2)}^2\sum_{r\in\z}|\widehat{h}(-(\lambda-r))|^2\leq B\|\{c_{k,l}\}\|_{\ell^2(\z^2)}^2,
\end{align*}
for a.e. $\lambda\in(0,1]$, which is equivalent to
\begin{align*}
\frac{A}{2}\leq \sum_{r\in\z}|\widehat{h}(-(\lambda-r))|^2\leq\frac{B}{2},
\end{align*}
for a.e. $\lambda\in(0,1]$. Hence, the required result follows from Theorem \ref{h}.
\end{proof}
\begin{example}\label{g}
Let $\p(x,y,t):=\chi_{[0,2]}(x)\chi_{[0,1]}(y)B_n(t)$, where $B_n$ denotes the cardinal polynomial $B$-spline of order $n$.
It is well known that $\{T_rB_n:r\in\z\}$ is a \emph{Riesz} sequence in $L^2(\R)$, for each $n\in\N$. Hence, it follows from Proposition \ref{i} that $\{L_{(2k,l,m)}\p:k,l,m\in\z\}$ is a \emph{Riesz} sequence.
\end{example}
\section{Oblique dual of the system of left translates}
In this section, investigate the structure of an oblique dual of the system of left translates $\{L_{(2k,l,m)}\p:k,l,m\in\z\}$.
\begin{lemma}\label{b}
Assume that $\p,\widetilde{\p}\in\L$ have compact support and $\{L_{(2k,l,m)}\p:k,l,m\in\z\}$ and $\{L_{(2k,l,m)}\widetilde{\p}:k,l,m\in\z\}$ form {\it Riesz} sequences. Then, the following statements are equivalent:
\begin{enumerate}
\item[\emph{(i)}] $f=\sum\limits_{k,l,m\in\z}\langle f,L_{(2k,l,m)}\widetilde{\p}\rangle L_{(2k,l,m)}\p$, $\;\;\forall\ f\in V:=\overline{\Span\{L_{(2k,l,m)}\p:k,l,m\in\z\}}$.
\item[\emph{(ii)}] $\langle \p,L_{(2k,l,m)}\widetilde{\p}\rangle=\delta_{(k,l,m),(0,0,0)}$, $\;\;\forall\ (k,l,m)\in\z^3$.
\end{enumerate} 
\end{lemma}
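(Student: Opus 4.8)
The plan is to establish the equivalence (i) $\Leftrightarrow$ (ii) by exploiting the fact that the operators $L_{(2k,l,m)}$ act by unitary left translation and that $V$ is, by definition, the closed span of the left translates of $\p$. The implication (i) $\Rightarrow$ (ii) is immediate: since $\p\in V$, apply the reproducing formula in (i) with $f=\p$ and compare coefficients against the \emph{Riesz} basis $\{L_{(2k,l,m)}\p\}$; because a \emph{Riesz} sequence has unique coefficients, the expansion $\p=\sum_{k,l,m}\langle\p,L_{(2k,l,m)}\widetilde\p\rangle L_{(2k,l,m)}\p$ must coincide term-by-term with the trivial expansion $\p=1\cdot L_{(0,0,0)}\p$, forcing $\langle\p,L_{(2k,l,m)}\widetilde\p\rangle=\delta_{(k,l,m),(0,0,0)}$.

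For the converse (ii) $\Rightarrow$ (i), first I would verify that the map $S f:=\sum_{k,l,m\in\z}\langle f,L_{(2k,l,m)}\widetilde\p\rangle L_{(2k,l,m)}\p$ is a well-defined bounded operator on $V$: the Bessel property of $\{L_{(2k,l,m)}\widetilde\p\}$ (which it has, being a \emph{Riesz} sequence) controls the coefficient sequence in $\ell^2$, and the Bessel property of $\{L_{(2k,l,m)}\p\}$ then guarantees convergence of the sum in $L^2(\H)$. Since $V=\overline{\Span\{L_{(2k',l',m')}\p\}}$, to prove $Sf=f$ on $V$ it suffices to prove $S\,L_{(2k',l',m')}\p=L_{(2k',l',m')}\p$ for each $(k',l',m')\in\z^3$, and by continuity and linearity of $S$ this extends to all of $V$.

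The key computational step is then to show that the ``Gram-type'' coefficients $\langle L_{(2k',l',m')}\p,\,L_{(2k,l,m)}\widetilde\p\rangle$ depend only on the difference $(k-k',l-l',m-m')$ — more precisely, that left translation by $\Gamma=\{(2k,l,m)\}$ permutes the system $\{L_{(2k,l,m)}\widetilde\p\}$ up to a relabeling — so that $\langle L_{(2k',l',m')}\p, L_{(2k,l,m)}\widetilde\p\rangle = \langle \p, L_{(2(k-k'),\,l-l',\,m-m'-\sigma)}\widetilde\p\rangle$ for an appropriate index shift $\sigma=\sigma(k',l',k-k')$ coming from the noncommutativity of the group law on $\H$. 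The crucial point is that $\Gamma$ is a subgroup of $\H$ (this is why the lattice is taken as $\{(2k,l,m)\}$ rather than $\{(k,l,m)\}$: the factor of $2$ makes $\tfrac12(x'y-y'x)$ integer-valued on $\Gamma$), so $(2k',l',m')^{-1}(2k,l,m)\in\Gamma$, and the cocycle relation $L_{\gamma_1}L_{\gamma_2}=L_{\gamma_1\gamma_2}$ together with unitarity of $L_{(2k',l',m')}$ lets one reindex the sum $S\,L_{(2k',l',m')}\p = \sum_{k,l,m}\langle L_{(2k',l',m')}\p, L_{(2k,l,m)}\widetilde\p\rangle L_{(2k,l,m)}\p$ by the substitution $(2k,l,m)\mapsto (2k',l',m')(2k,l,m)$. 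After this reindexing the coefficient becomes $\langle \p, L_{(2k,l,m)}\widetilde\p\rangle=\delta_{(k,l,m),(0,0,0)}$ by hypothesis (ii), leaving only the term $L_{(2k',l',m')}\p$, which is exactly what we want.

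The main obstacle I anticipate is bookkeeping the index shift in the central variable correctly: when one writes $(2k',l',m')(2k,l,m)$ using the Heisenberg product, the $t$-component picks up the term $\tfrac12(2k\cdot l' - l\cdot 2k')=k l'-l k'\in\z$, so the substitution stays inside $\Gamma$ but the ``$m$'' label shifts by a $(k,l,k',l')$-dependent integer; one must check that summing over this shifted index over all of $\z$ is still a sum over all of $\z$ (it is, since the shift is a fixed integer for fixed $k',l',k,l$) and that the delta from (ii) still collapses the sum to the single desired term. A secondary, more routine point is justifying the interchange of the (convergent) $\ell^2$-sum with inner products, which follows from the Bessel bounds; the compact-support hypothesis on $\p,\widetilde\p$ is not really needed for this lemma itself but is carried along for use in the subsequent moment-problem arguments of the section.
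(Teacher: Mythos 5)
Your proposal is correct and follows essentially the same route as the paper: (i)$\Rightarrow$(ii) via uniqueness of coefficients in a \emph{Riesz} sequence applied to $f=\p$, and (ii)$\Rightarrow$(i) via the group-law identity $\langle L_{(2k',l',m')}\p,L_{(2k,l,m)}\widetilde{\p}\rangle=\langle\p,L_{(2(k-k'),\,l-l',\,m-m'+(k'l-l'k))}\widetilde{\p}\rangle$, which collapses the sum to a single term. The paper phrases the converse as a direct computation showing $\langle f,L_{(2k,l,m)}\widetilde{\p}\rangle=c_{k,l,m}$ for $f=\sum c_{k,l,m}L_{(2k,l,m)}\p$ rather than through your operator $S$ fixed on generators, but the content, including the central-variable index shift, is identical.
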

\begin{proof}
The proof of this lemma is similar to the proof of Lemma $2.1$ in \cite{okl}. However, for the sake of completeness, we provide the proof. Suppose that (i) holds. As (i) is true for $f=\p$, we have
\begin{align*}
\p=\sum_{k,l,m\in\z}\langle \p,L_{(2k,l,m)}\widetilde{\p}\rangle L_{(2k,l,m)}\p,
\end{align*}
which leads to
\begin{align*}
[\langle \p,L_{(2k,l,m)}\widetilde{\p}\rangle-1]\p\ +\sum_{\substack{k,l,m\in\z \\ (k,l,m)\neq(0,0,0)}}\langle \p,L_{(2k,l,m)}\widetilde{\p}\rangle L_{(2k,l,m)}\p=0.
\end{align*}
As $\{L_{(2k,l,m)}\p:k,l,m\in\z\}$ is a {\it Riesz} sequence, we know that $\langle \p,L_{(2k,l,m)}\widetilde{\p}\rangle=\delta_{(k,l,m),(0,0,0)}$, $\forall\ (k,l,m)\in\z^3$, which is (ii).

Conversely, suppose (ii) holds. Let $f\in V$. Then $f=\sum\limits_{k,l,m\in\z}c_{k,l,m}L_{(2k,l,m)}\p$, for some coefficients $\{c_{k,l,m}\}$. Now,
\begin{align*}
\langle f,L_{(2k,l,m)}\widetilde{\p}\rangle&=\sum_{k^\prime,l^\prime,m^\prime\in\z}c_{k^\prime,l^\prime,m^\prime}\langle L_{(2k^\prime,l^\prime,m^\prime)}\p,L_{(2k,l,m)}\widetilde{\p}\rangle\\
&=\sum_{k^\prime,l^\prime,m^\prime\in\z}c_{k^\prime,l^\prime,m^\prime}\langle \p,L_{(2(k-k^\prime),l-l^\prime,m-m^\prime+(k^\prime l-l^\prime k))}\widetilde{\p}\rangle\\
&=\sum_{k^\prime,l^\prime,m^\prime\in\z}c_{k^\prime,l^\prime,m^\prime}\delta_{(k-k^\prime,l-l^\prime,m-m^\prime+(k^\prime l-l^\prime k)),(0,0,0)}\\
&=c_{k,l,m},
\end{align*}
from which (i) follows.
\end{proof}
\begin{theorem}\label{e}
Let $\p\in\L$ be supported in $[0,2n]\times[0,n]\times[0,M]$, for some $M,n\in\N$. Also assume that the system $\{L_{(2k,l,m)}\p:k,l,m\in\z\}$ forms a \emph{Riesz} sequence. Then, the following statements are equivalent:
\begin{enumerate}
\item[\emph{(i)}] The system $\{L_{(2k,l,m)}\p:k,l,m\in\z\}$ has a generalized dual $\{L_{(2k,l,m)}\widetilde{\p}:k,l,m\in\z\}$ with $\supp\widetilde{\p}\subset Q$, where $Q:=[0,2]\times [0,1]\times [0,1]$.
\item[\emph{(ii)}] If $\sum\limits_{(k,l,m)\in A}c_{k,l,m}L_{(2k,l,m)}\p(x,y,t)=0$, for all $(x,y,t)\in Q$ and for some coefficients $\{c_{k,l,m}\}$, then $c_{0,0,0}=0$, where $A:=\{-(n-1)\leq k,l\leq 0, -M-n+1<m<n\}$.
\item[\emph{(iii)}] $\p|_Q\notin \Span\left\{(L_{(2k,l,m)}\p)|_Q:(k,l,m)\in A\setminus\{(0,0,0)\}\right\}$. 
\end{enumerate}
In case that any one of the above conditions is satisfied, the generalized duals $\{L_{(2k,l,m)}\widetilde{\p}:k,l,m\in\z\}$ form orthogonal sequences and they are oblique duals of $\{L_{(2k,l,m)}\p:k,l,m\in\z\}$. One can choose $\widetilde{\p}$ to be of the form 
\begin{align*}
\widetilde{\p}=\bigg[\sum_{(k,l,m)\in A}d_{k,l,m}L_{(2k,l,m)}\p\bigg]\chi_Q,
\end{align*}
for some coefficients $\{d_{k,l,m}\}$. Here, $\chi_Q$ denotes the characteristic function of $Q$.
\end{theorem}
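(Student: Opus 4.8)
The plan is to reduce statement (i) to a finite moment problem on the cube $Q$ and then invoke Lemma \ref{c}. The starting point is the observation that $Q$ is a fundamental domain for the left action of the lattice $\Gamma=\{(2k,l,m):k,l,m\in\z\}$ on $\H$: writing $L_{(2k,l,m)}F(x,y,t)=F(x-2k,\,y-l,\,t-m-\tfrac12(xl-2ky))$, one checks that for a.e.\ $(x,y,t)\in\H$ there is exactly one $\gamma\in\Gamma$ with $(x,y,t)\in\gamma\cdot Q$. Consequently, for any $\widetilde{\p}$ supported in $Q$ the translates $\{L_\gamma\widetilde{\p}:\gamma\in\Gamma\}$ have pairwise essentially disjoint supports, hence are mutually orthogonal; if moreover $\widetilde{\p}\neq0$ they share the common norm $\|\widetilde{\p}\|$ and thus form a Riesz, in particular a frame, sequence. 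This single geometric fact supplies both the orthogonality claim in the conclusion and the Riesz hypothesis on $\widetilde{\p}$ needed to apply Lemma \ref{b}.

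First I would pin down the index set $A$ by computing the support of $L_{(2k,l,m)}\p$ against $Q$. The constraints $x-2k\in[0,2n]$ and $y-l\in[0,n]$ with $(x,y,t)\in Q$ force $-(n-1)\le k,l\le 0$, while the third constraint $t-m-\tfrac12(xl-2ky)\in[0,M]$ combined with $t\in[0,1]$ forces $-M-n+1<m<n$, the $\pm(n-1)$ arising from the range of the twist term $\tfrac12(xl-2ky)$ over $(x,y)\in[0,2]\times[0,1]$ and $-(n-1)\le k,l\le0$. Thus $(L_{(2k,l,m)}\p)|_Q=0$ for every $(k,l,m)\notin A$, and $A$ is exactly the set of indices whose translate can meet $Q$ in positive measure. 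Establishing these bounds correctly, and in particular tracking how the Heisenberg twist enters the $m$-range, is the part requiring the most care and is the main obstacle.

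Next I would carry out the reduction. By Lemma \ref{b} (whose Riesz hypothesis on $\widetilde{\p}$ is free by the fundamental-domain remark, since a generalized dual cannot be generated by $\widetilde{\p}=0$ as $\p\neq0$), statement (i) is equivalent to the moment conditions $\langle\p,L_{(2k,l,m)}\widetilde{\p}\rangle=\delta_{(k,l,m),(0,0,0)}$ for all $(k,l,m)\in\z^3$. Using unitarity, $L_{(2k,l,m)}^{*}=L_{(-2k,-l,-m)}$, and the fact that $\widetilde{\p}$ lives on $Q$, I rewrite each such inner product as $\langle(L_{(-2k,-l,-m)}\p)|_Q,\widetilde{\p}\rangle_{L^2(Q)}$. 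After the substitution $(k,l,m)\mapsto(-k,-l,-m)$ this becomes the system $\langle(L_{(2k,l,m)}\p)|_Q,\widetilde{\p}\rangle_{L^2(Q)}=\delta_{(k,l,m),(0,0,0)}$; by the support computation all equations with $(k,l,m)\notin A$ read $0=0$ and may be discarded, leaving a finite moment problem in $L^2(Q)$ with data vectors $f_{(k,l,m)}:=(L_{(2k,l,m)}\p)|_Q$, $(k,l,m)\in A$, and distinguished vector $f_{(0,0,0)}=\p|_Q$.

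Finally I would apply Lemma \ref{c} to this finite moment problem with $f_1=\p|_Q$ and $\{f_2,\dots,f_N\}=\{(L_{(2k,l,m)}\p)|_Q:(k,l,m)\in A\setminus\{(0,0,0)\}\}$. Its equivalence (i)$\Leftrightarrow$(ii)$\Leftrightarrow$(iii) translates verbatim into the three statements of the theorem, since $\sum_{(k,l,m)\in A}c_{k,l,m}f_{(k,l,m)}=0$ in $L^2(Q)$ is exactly the vanishing of $\sum_{(k,l,m)\in A}c_{k,l,m}L_{(2k,l,m)}\p$ on $Q$, and $f_1\notin\Span\{f_2,\dots,f_N\}$ is condition (iii). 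For the concluding assertions I would take the minimal-norm solution $\widetilde{\p}$ of the moment problem; by the normal equations it lies in $\Span\{f_{(k,l,m)}:(k,l,m)\in A\}$, giving the stated form $\widetilde{\p}=\big[\sum_{(k,l,m)\in A}d_{k,l,m}L_{(2k,l,m)}\p\big]\chi_Q$. Since the $(0,0,0)$ equation gives $\langle\p|_Q,\widetilde{\p}\rangle=1$, we have $\widetilde{\p}\neq0$, so by the fundamental-domain remark $\{L_{(2k,l,m)}\widetilde{\p}\}$ is orthogonal and hence a frame sequence; being a generalized dual, it is therefore an oblique dual.
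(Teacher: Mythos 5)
Your proposal is correct and follows essentially the same route as the paper: reduce (i) via Lemma \ref{b} to the biorthogonality conditions, use the support computation to discard all indices outside $A$ so that a finite moment problem on $Q$ remains, and then invoke Lemma \ref{c} to get the equivalence with (ii) and (iii), with $\supp\widetilde{\p}\subset Q$ yielding orthogonality of the dual system. In fact you fill in details the paper leaves implicit --- the fundamental-domain argument guaranteeing that $\{L_{(2k,l,m)}\widetilde{\p}\}$ is a \emph{Riesz} sequence (needed to apply Lemma \ref{b}) and the minimal-norm solution justifying the stated form of $\widetilde{\p}$ --- so your write-up is, if anything, more complete.
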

\begin{proof}
The idea of the proof is similar to that of Theorem $3.1$ of \cite{okl}. Here, we provide the main steps in the proof. 

Let $\widetilde{\p}\in\L$ be such that $\supp\widetilde{\p}\subset Q$. Then,
\begin{align*}
\langle L_{(2k,l,m)}\p,\widetilde{\p}\rangle&=\int_QL_{(2k,l,m)}\p(x,y,t)\overline{\widetilde{\p}}(x,y,t)~dxdydt\\
&=\int_{-2k}^{2(1-k)}\int_{-l}^{1-l}\int_{-m+\frac{1}{2}(2ky-lx)}^{1-m+\frac{1}{2}(-lx+2ky)}\p(x,y,t)\overline{\widetilde{\p}}(x+2k,y+l,\\
&\hspace{8 cm} t+m-\frac{1}{2}(-lx+2ky))~dtdydx,
\end{align*}
by applying a change of variables. Further, using $\supp\p\subset [0,2n]\times[0,n]\times[0,M]$, we obtain $\langle L_{(2k,l,m)}\p,\widetilde{\p}\rangle=0,\ \forall\ (k,l,m)\in A^c$. 

Assume that (i) holds. Then, by Lemma \ref{b}, we have that $\langle \p,L_{(2k,l,m)}\widetilde{\p}\rangle=\delta_{(k,l,m),(0,0,0)}$, $\forall\ (k,l,m)\in\z^3$. Hence, we obtain the moment problem
\begin{align*}
\langle L_{(2k,l,m)}\p,\widetilde{\p}\rangle=\delta_{(k,l,m),(0,0,0)},
\end{align*}
for $(k,l,m)\in A$. Now, condition (i) is equivalent to the existence of a solution of the moment problem. By Lemma \ref{c}, the existence of a solution of the moment problem is equivalent to conditions (ii) and (iii). Moreover, if (i) is true, then $\supp\widetilde{\p}\subset Q$ leads to the fact that the system $\{L_{(2k,l,m)}\widetilde{\p}:k,l,m\in\z\}$ is an orthogonal sequence.
\end{proof} 
\begin{example}
Let $\p(x,y,t):=\chi_{[0,2]}(x)\chi_{[0,1]}(y)B_3(t)$, where $B_3$ is the cardinal polynomial $B$-spline of order $3$, given by
\begin{align*}
B_3(t)=\begin{cases}
\frac{1}{2}t^2, & t\in[0,1];\\
-t^2+3t-\frac{3}{2}, & t\in[1,2];\\
\frac{1}{2}t^2-3t+\frac{9}{2}, & t\in[2,3];\\
0, & \text{otherwise}.
\end{cases}
\end{align*}
Thus, it follows from Example \ref{g} that $\{L_{(2k,l,m)}\p:k,l,m\in\z\}$ is a \emph{Riesz} sequence. We know that $\supp\widetilde{\p}\subset Q$. Consider
\begin{align*}
\langle L_{(2k,l,m)}\p,\widetilde{\p}\rangle&=\int_0^2\int_0^1\int_0^1\p(x-2k,y-l,t-m+\tfrac{1}{2}(2ky-lx))\overline{\widetilde{\p}(x,y,t)}\,dt\,dy\,dx\\
&=\int_{[-2k,2-2k]\cap[0,2]}\int_{[-l,1-l]\cap[0,1]}\int_0^1B_3(t-m+\tfrac{1}{2}(2ky-lx))\overline{\widetilde{\p}(x+2k,y+l,t)}\,dt\,dy\,dx.
\end{align*}
Hence, for $(k,l)\neq(0,0)$, $\langle L_{(2k,l,m)}\p,\widetilde{\p}\rangle=0$. 

For $(k,l)=(0,0)$, we have
\begin{align*}
\langle L_{(0,0,m)}\p,\widetilde{\p}\rangle=\int_0^2\int_0^1\int_{[-m,1-m]\cap[0,3]}B_3(t)\overline{\widetilde{\p}(x.y,t+m)}\,dt\,dy\,dx,
\end{align*}
which shows that $-2\leq m\leq 0$. Define $\Lambda:=\{(0,0,-2),(0,0,-1),(0,0,0)\}$. Then, $\langle L_{(0,0,m)}\p,\widetilde{\p}\rangle=0$, $\forall\ (k,l,m)\notin\Lambda$. Furthermore, it is easy to show that $\left\{\p|_Q, \left(L_{(0,0,-1)}\p\right)|_Q,\left(L_{(0,0,-2)}\p\right)|_Q\right\}$ is a linearly independent set. Thus, by Theorem \ref{e}, an oblique dual of $\p$ is given by
\begin{align*}
\widetilde{\p}=\bigg[\sum_{m=-2,-1,0}d_{m}L_{(0,0,m)}\p\bigg]\chi_Q,\numberthis \label{42}
\end{align*}
satisfying the moment problem
\begin{align*}
\langle L_{(0,0,m)}\p,\widetilde{\p}\rangle=\delta_{0,m}.\numberthis \label{41}
\end{align*}
for $m=-2,-1,0$. 

Next, we proceed to solve the above moment problem. Substituting \eqref{42} in \eqref{41}, we get the following equations
\begin{align*}
\sum_{m=-2,-1,0}\overline{d_m}\ \langle \p,L_{(0,0,m)}\p\cdot\chi_Q\rangle=1,
\end{align*}
\begin{align*}
\sum_{m=-2,-1,0}\overline{d_m}\ \langle L_{(0,0,-1)}\p,L_{(0,0,m)}\p\cdot\chi_Q\rangle=0,
\end{align*}
\begin{align*}
\sum_{m=-2,-1,0}\overline{d_m}\ \langle L_{(0,0,-2)}\p,L_{(0,0,m)}\p\cdot\chi_Q\rangle=0.
\end{align*}
Upon simplification, we obtain
\begin{align*}
&6d_0+13d_{-1}+d_{-2}=60,\\
&d_0+\frac{54}{13}d_{-1}+d_{-2}=0,\\
&d_0+13d_{-1}+6d_{-2}=0.
\end{align*}
Solving these equations and then substituting back into \eqref{42}, yields
\begin{align*}
\widetilde{\p}(x,y,t)=\tfrac{3}{2}(40t^2-36t+5)\chi_Q(x,y,t).
\end{align*}
\end{example}
\bibliographystyle{amsplain}
\bibliography{Hsplines-2}
\end{document}